\newcommand{\CS}{\text{CS}}
\newcommand{\Tor}{\text{Tor}}
\newcommand{\Tr}{\text{Tr}}
\newcommand{\Ima}{\text{Im}}
\newcommand{\Kernel}{\text{Ker}}
\newcommand{\Sym}{\text{Sym}}
\newcommand{\tq}{\tilde{q}}
\newcommand{\SU}{\text{SU}}
\newcommand{\bbZ}{\mathbb{Z}}
\newcommand{\bbR}{\mathbb{R}}
\newcommand{\bbC}{\mathbb{C}}
\newcommand{\Hom}{\textrm{Hom}}
\newcommand{\SL}{\textrm{SL}}
\newcommand{\Adj}{\textrm{Adj}}
\theoremstyle{definition}
\newtheorem{theorem}{Theorem}[section]
\newtheorem{lemma}[theorem]{Lemma}
\newtheorem{lemma-definition}[theorem]{Lemma-Definition}
\newtheorem{definition}[theorem]{Definition}
\newtheorem{proof}{Proof}[section]
\newtheorem{theorem*}{Theorem}[section]
\title{From Torus Bundles to Particle-Hole Equivariantization}
\author{Shawn X. Cui}
\address{Department of Mathematics and Department of Physics and Astronomy\\Purdue University\\
West Lafayette\\
IN 47907\\U.S.A.}
\email{cui177@purdue.edu}
\author{Paul Gustafson}
\address{Department of Electrical Engineering\\
Wright State University\\
Dayton\\
OH 45435\\
U.S.A
}
\email{paul.gustafson@wright.edu}
\author{Yang Qiu}
\address{Department of Mathematics\\
University of California\\Santa Barbara\\CA 93106\\U.S.A}
\email{yangqiu@math.ucsb.edu}
\author{Qing Zhang}
\address{Department of Mathematics\\Purdue University\\
West Lafayette\\
IN 47907\\U.S.A.}
\email{zhan4169@purdue.edu}
\date{}
\begin{document}
\maketitle
\begin{abstract}
We continue the program of constructing (pre)modular tensor categories from 3-manifolds first initiated by Cho-Gang-Kim using $M$ theory in physics and then mathematically studied by Cui-Qiu-Wang. An important structure involved in the construction is a collection of certain $\text{SL}(2, \mathbb{C})$ characters on a given manifold which serve as the simple object types in the corresponding category. Chern-Simons invariants and adjoint Reidemeister torsions also play a key role, and they are related to topological twists and quantum dimensions, respectively, of simple objects. The modular $S$-matrix is computed from local operators and follows a trial-and-error procedure. It is currently unknown how to produce data beyond the modular $S$- and $T$-matrices. There are also a number of subtleties in the construction which remain to be solved. In this paper, we consider an infinite family of 3-manifolds, that is, torus bundles over the circle. We show that the modular data produced by such manifolds are realized by the $\mathbb{Z}_2$-equivariantization of certain pointed premodular categories. Here the  equivariantization is performed for the $\mathbb{Z}_2$-action sending a simple (invertible) object to its inverse, also called the particle-hole symmetry. It is our hope that this extensive class of examples will shed light on how to improve the program to recover the full data of a premodular category.
\end{abstract}

\section{Introduction}
Quantum topology emerged from the discovery of the Jones polynomial \cite{jones1985polynomial} and the formulation of topological quantum field theory (TQFT) \cite{witten1989quantum,atiyah1988topological} in the 1980s. Since then, rapid progress of the subject has revealed deep connections between the algebraic/quantum world of tensor categories and the topological/classical world of 3-manifolds. One bridge connecting these two worlds is given by TQFTs. More precisely, quantum invariants of 3-manifolds and $(2+1)$-dimensional TQFTs can be constructed from modular tensor categories, a special class of tensor categories. Two fundamental families in (2+1)-dimensions are the Reshetikhin-Turaev \cite{reshetikhin1991invariants} and Turaev-Viro \cite{turaev1992state} TQFTs, both of which are based on certain tensor categories. Both families serve as vast
generalizations of the Jones polynomial to knots in arbitrary 3-manifolds. Quantum invariants induced by TQFTs provide insights to understand 3-manifolds. For example, they can distinguish some homotopically equivalent but non-homeomorphic manifolds. 

Recently, motivated by $M$-theory in physics, the authors in \cite{cho2020m} proposed another relation between tensor categories and 3-manifolds roughly in the converse direction. Explicitly, they outlined a program to construct modular tensor categories from certain classes of closed oriented 3-manifolds. A central structure to study is an $\SL(2,\bbC)$ flat connection which corresponds to a conjugacy class of morphisms from the fundamental group to $\SL(2,\bbC)$. The manifolds are required to have 
finitely many non-Abelian $\SL(2,\bbC)$ flat connections, and each must be gauge equivalent to an $\SL(2,\bbR)$ or $\SU(2)$ flat connection.  Classical invariants such as the Chern-Simons invariant and twisted  Reidemeister torsion also play a key role in the construction. 

In \cite{cui2021three}, the authors mathematically explored the program in greater detail. They systemically studied two infinite families of 3-manifolds, namely, Seifert fibered spaces with three singular fibers and torus bundles over the circle whose monodromy matrix has odd trace. It was shown that the first family realize modular tensor categories related to the Temperley-Lieb-Jones category \cite{turaev1994quantum}, and the second family was related to the quantum group category of type $B$. Based on their computations, the authors revealed several subtleties in the original proposal and made a number of insightful improvements. For instance, a simple object in the constructed category should correspond to a non-Abelian $\SL(2,\bbC)$ character from the fundamental group, rather than a conjugacy class of $\SL(2,\bbC)$ representations. Moreover, the characters are not necessarily conjugate to a $\SL(2,\bbR)$ or $\SU(2)$ character; rather, they just need to have real Chern-Simons invariants.  Also, the category to be constructed may not always be modular, but will be a pre-modular category in general, and conjecturally it is non-degenerate if and only if the first cohomology of the manifold with $\mathbb{Z}_2$ coefficients is trivial.

The efforts in \cite{cho2020m} and \cite{cui2021three} suggest a far-reaching connection between 3-manifolds and (pre)modular tensor categories. However, this program is still at its infancy, and there remain many questions to be resolved. First and foremost, the program currently only provides an algorithm to compute the modular $S$- and $T$-matrices. Other data such as the $F$-symbols and $R$-symbols, which specify the associators and braidings, respectively \cite{wang2010topological}, are still missing. Secondly, even for the modular data, the computation for the $S$-matrix essentially follows a trial-and-error procedure. A definite algorithm to achieve that is in demand. Thirdly, there are also a number of subtleties in choosing the correct set of characters as simple objects, determining the proper unit object, etc.  Before these problems can be settled, more case studies are of great value in offering insights from various perspectives, which is the motivation for the current paper. We hope the insights obtained will lead to an intrinsic understanding of how and why this program works.

In this paper, we continue the work of \cite{cui2021three} to apply the program to torus bundles over the circle with SOL geometry \cite{scott83}. The examples of Seifert fibered spaces in \cite{cui2021three} covered six of the eight geometries, the ones left being the hyperbolic and SOL. Since the program concerns closed manifolds whose Chern-Simons invariants are all real, hyperbolic manifolds are thus excluded. By definition, a torus bundle over the circle is the quotient of $T^2 \times [0,1]$ by identifying $T^2 \times \{0\}$ and $T^2 \times \{1\}$ via a self diffeomorphism of $T^2$, where $T^2$ denotes the torus. Since the mapping class group of $T^2$ is $\SL(2,\bbZ)$, a torus bundle over the circle is uniquely determined by the isotopy class of the gluing diffeomorphism, called the monodromy matrix,  which is an element in $\SL(2,\bbZ)$. 
Torus bundles whose monodromy is Anosov have SOL geometry. Equivalently, a torus bundle
has SOL geometry if and only if its monodromy matrix $A$ satisfies $|\Tr(A)| > 2$ \cite{scott83}. In \cite{cui2021three}, only special cases of $\Tr(A)$ being odd were considered, and the resulting  modular data is related to the quantum group categories of type $B$. It was conjectured that other cases correspond to this type of categories as well. However, we prove in the current paper that this is incorrect.

To state our main result, some more notations are required. For a finite Abelian group $G$ and a quadratic form $q\colon G \to \mathbb{C}$, denote by $\mcC (G,q)$ the pointed premodular category whose isomorphism classes of simple objects are $G$ and whose topological twist is given by $q$. There is a $\bbZ_2$-action on $\mcC (G,q)$ defined by sending each simple object to its dual (or its inverse viewed as a group element). This $\bbZ_2$-action is also called the particle-hole symmetry of $\mcC (G,q)$. Denote by $\mcC(G,q)^{\mathbb Z_2}$ the $\bbZ_2$-equivariantization of $\mcC (G,q)$ with respect to the particle-hole symmetry. See Section \ref{sec:equiv} for more details. The main result is as follows.
\begin{theorem*}[also see Theorem \ref{thm:main}]
For each torus bundle over the circle $M_A$ with monodromy matrix $A$, $N:= |\Tr(A)+2| $, there is an associated finite Abelian group $G_A$ isomorphic to $\bbZ_r \times \bbZ_{N/r}$  for some integer $r \ge 1$ (Lemma \ref{lem:def_G}) and a quadratic form $q_A(x) := \exp(\frac{2\pi i \tilde{q}(x)}{N}),\  \tilde{q}: G \to \bbZ_N$ (Lemma \ref{lem:well-defined}) such that the modular data realized by $M_A$ coincide with those of $\mcC(G_A,q_A)^{\mathbb Z_2}$. 
\end{theorem*}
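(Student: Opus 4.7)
The plan is to compute both sides of the claimed equality explicitly and match them. On the manifold side, I would begin by writing $\pi_1(M_A) = \bbZ^2 \rtimes_A \bbZ$, generated by two commuting elements $\alpha,\beta$ of the torus together with a stable letter $t$ satisfying $t\gamma t^{-1} = A\cdot \gamma$ for $\gamma \in \langle \alpha,\beta\rangle$. A non-Abelian $\SL(2,\bbC)$ character $\rho$ must send $\alpha,\beta$ to commuting non-central matrices, hence after conjugation into a common diagonal torus; the defining relation forces the eigenvalue vector $\lambda \in (\bbC^*)^2$ to satisfy $A\cdot \lambda = \lambda^{\pm 1}$, and the two sign branches correspond to the two Weyl conjugates that get identified at the character level. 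For $\rho$ to have real Chern--Simons invariant, $\lambda$ must lie on the unit torus, so the characters are parameterized by the $-1$-eigenvectors of $A$ acting on $(\bbR/\bbZ)^2$, modulo the Weyl $\bbZ_2$. This eigenspace is precisely $\Kernel(A+I)$ on $(\bbR/\bbZ)^2$, which by Pontryagin duality has the same order as $\text{coker}(A+I) = \bbZ^2 / (A+I)\bbZ^2$; that cokernel is the group $G_A$ of Lemma \ref{lem:def_G}, and Smith normal form gives its structure $\bbZ_r \times \bbZ_{N/r}$ with $N = |\det(A+I)| = |\Tr(A)+2|$.

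Next I would classify the simple objects of $\mcC(G_A,q_A)^{\bbZ_2}$ using the standard orbit/stabilizer recipe for pointed equivariantizations: each free orbit $\{g,-g\}$ of the inversion on $G_A$ contributes one simple object, and each fixed point (element of order $\leq 2$) contributes two simple objects distinguished by a sign. I would then set up the bijection with the manifold-side character set, paying careful attention to the Abelian locus: a non-Abelian character becomes Abelian precisely when $\lambda = \lambda^{-1}$, i.e., when the eigenvector is 2-torsion, and these are the characters that the framework of \cite{cui2021three} treats via the adjustment of first $\bbZ_2$-cohomology. The two signs on the equivariantized side will match the sign ambiguity coming from choosing a square root when lifting an order-two Abelian representation to a simple object of the constructed category.

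With the labeling in place, I would match the modular $T$-matrix. On the category side the twist is $q_A(g) = \exp(2\pi i \tq(g)/N)$, so the task is to show that the Chern--Simons invariant of the character associated with $g$ equals $\tq(g)/N$ modulo $\bbZ$. The quadratic form $\tq$ on $G_A$ should come from the intersection pairing associated to the symmetric bilinear form $(A+I)^{-1}$ on $\bbZ^2$, which is exactly the form that arises in gluing formulas for Chern--Simons on mapping tori of $T^2$; this is essentially a computation in Kirby calculus or via the formulas for $\CS$ of torus bundles reviewed in \cite{cui2021three}. In parallel, the quantum dimensions on the equivariantization side are $1$ for the split (fixed-point) simples and $2$ for the orbit simples, and these must match the adjoint Reidemeister torsion values, whose computation for torus bundles reduces to a $2\times 2$ determinant involving $A+I$.

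The last and hardest step is the $S$-matrix. On the category side, the $\bbZ_2$-equivariantization of a pointed premodular category has an $S$-matrix that is a block combination of the character table of $\bbZ_2$ with the bilinear form $b_{q_A}$ on $G_A$, so its entries are explicit Gauss-sum-like expressions in $\tq$. On the manifold side, the $S$-matrix is produced by the trial-and-error local-operator procedure of \cite{cho2020m,cui2021three}, and the main technical obstacle is to show that this procedure, which is not a priori an algorithm, actually converges to the equivariantization $S$-matrix. I would attack this by first identifying the invertible (transparent or near-transparent) objects coming from the Abelian/fixed-point sector as forcing a fusion subcategory isomorphic to $\text{Rep}(\bbZ_2)$, then using the known structure of pointed equivariantizations to pin down the remaining entries up to the finite ambiguity allowed by the trial-and-error method, and finally eliminating that ambiguity by checking unitarity of $S$ and the balancing equation relating $S$, $T$ and the twists computed in the previous paragraph. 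I expect this final matching, rather than the $T$-matrix or the character classification, to be the main obstacle, since it is the only step where the manifold-side construction is not already algorithmic.
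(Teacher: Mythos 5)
Your treatment of the character variety, the identification $G_A \cong \operatorname{coker}(A+I) \cong \bbZ_r\times\bbZ_{N/r}$ via Smith normal form, and the matching of twists with $-\tq(\mu,\nu)/N$ through the Chern--Simons formula all track the paper's argument closely (the paper realizes $G_A$ as the image of the adjugate of $(A+I)^{T}$ in $\bbZ_N\times\bbZ_N$ rather than as a cokernel, but these are the same group). The gap is in the $S$-matrix step, which you correctly single out as the crux but then handle by a method that cannot close. The ``modular data realized by $M_A$'' only becomes a well-posed object once a concrete assignment of loop operators $\chi_\alpha\mapsto\{(a_\alpha^\kappa,R_\alpha^\kappa)\}_\kappa$ is exhibited; arguing that any \emph{consistent} $S$-matrix must agree with the equivariantization one does not produce such an assignment, and producing it is the actual content of the theorem. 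The paper takes $X^{\pm}(\mu,\nu)\mapsto(x^my^n,\Sym^0)$ and $Y(\mu,\nu)\mapsto(x^my^n,\Sym^1)$ with $m=-b\mu+(a-1)\nu$, $n=(-d+1)\mu+c\nu$, chosen precisely so that the weight $\Tr\bigl(Y(\mu_2,\nu_2)(x^{m_1}y^{n_1})\bigr)$ evaluates, via the matrix identity $\begin{pmatrix}-b&-d+1\\a-1&c\end{pmatrix}\begin{pmatrix}d+1&-c\\-b&a+1\end{pmatrix}=\begin{pmatrix}-2b&a-d\\a-d&2c\end{pmatrix}$, to $2\cos\bigl(\tfrac{2\pi}{N}\lambda(\mu_1,\nu_1,\mu_2,\nu_2)\bigr)$. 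Without some such explicit choice there is nothing to compare.

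Moreover, your proposed device for eliminating the residual ambiguity --- unitarity of $S$ --- is unavailable here: $\mathcal{C}(G_A,q_A)^{\bbZ_2}$ is in general only premodular, and its $S$-matrix is visibly degenerate (all rows indexed by the invertibles $X^{\pm}$ coincide, being $1$ against every $X^{\pm}$ and $2$ against every $Y$). The balancing equation is likewise of limited use on the manifold side, since the program supplies no fusion rules there. Finally, even granting the equivariantization formulas of Section~\ref{subsec:SandT}, one must still verify that $\exp\bigl(\tfrac{2\pi i}{N}\lambda\bigr)$ is trivial whenever one argument lies in the $2$-torsion sector, so that the $X$-rows really are constantly $1$ and $2$; this is where the paper spends most of its effort, via a case analysis on the parities of $(a,d;b,c)$ and observations such as $l_2=-b\mu_2+(a+1)\nu_2$ being even in the relevant cases. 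Your proposal omits this verification entirely.
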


When $r=1$, $G_A$ is a cyclic group and the particle-hole  equivariantization of $\mcC (G_A,q_A)$ is the adjoint subcategory of the metaplectic $\SO(N)_2$. When $N$ is additionally odd, we recover the result in \cite{cui2021three}. The appearance of equivariantization seems to be a salient feature of torus bundles. We leave it as a future direction to explore a possible topological interpretation of equivariantization.

We note that although the (pre-)modular categories constructed in \cite{cui2021three} and the current paper are not new in any meaningful sense, it prompts the question of what classes of 3-manifolds correspond to what classes of premodular categories. Given our limited knowledge of the program, it is difficult to construct any new categories for the moment. Our current case studies thus serve the purpose of obtaining more insight to develop the program. Eventually, the hope is to have a better understanding of the interplay between 3-manifolds and tensor categories, and to produce more interesting pre-modular categories from topology.

The rest of the paper is organized as follows. In Section \ref{sec:pre}, we review some basic facts about premodular categories and recall the program of constructing (pre)modular categories from 3-manifolds. Section \ref{sec:equiv} is devoted to computing the modular data of the equivariantization of a pointed premodular category under the particle-hole symmetry. In Section \ref{sec:compare}, we state and prove the main theorem concerning the construction of premodular categories from torus bundles.

\section{Preliminaries}\label{sec:pre}

\subsection{Premodular categories}
Here we recall some basic notation and results involving premodular categories.  A more detailed treatment is given by standard reference material \cite{bakalov2001lectures,etingof2016tensor}. Unless otherwise specified, we will always be working over the base field of complex numbers.

Let $\mcC$  be a fusion category. We denote the set of isomorphism classes of simple objects of $\mcC$ by $\Irr(\mcC)=\{X_0=\1, \cdots,X_{n-1}\}$. 

We have the fusion rules given  by
$$X_{i} \otimes X_{j} \cong \sum_{k} N_{i, j}^{k} X_{k},$$
where $N_{i,j}^k=\dim \Hom(X_i\otimes X_j,X_k)$ are called the fusion coefficients.
For any $X_i\in \Irr(\mcC)$, the fusion matrix $N_i$ is given by $ (N_i)_{k,j} = N_{i,j}^k$. The largest positive eigenvalue of $N_i$ is called the Frobenius-Perron dimension (or FP-dimension) of $V_i$ and is denoted by $\FPdim(V_i)$ (cf. \cite{ENO}). A simple object $V\in \Irr(\mcC)$ is called invertible if $\FPdim(V)=1$.

A premodular category is a braided fusion category equipped with a ribbon structure.  A ribbon structure on a braided monoidal category $\mathcal{C}$ is a family of natural isomorphisms
$\theta_{V}: V \rightarrow V$ satisfying 
$$\theta_{V \otimes W}=\left(\theta_{V} \otimes \theta_{W}\right) \circ c_{W, V} \circ c_{V, W}$$
$$ \theta_{V^{*}}=\theta_{V}^{*}$$ for all $V, W\in \mcC$, where $c$ is the braiding.

Let $\mathcal C$ be a premodular category.  The (unnormalized) $S$-matrix of  $\mathcal{C}$ has entries
$$S_{i,j}:=\tr_{X^\ast_i \otimes X_j}\left(c_{X_j, X^\ast_i} \circ c_{X^\ast _i, X_j}\right), X_i, X_j \in \Irr(\mathcal{C}).$$
 A premodular category is said to be modular if its $S$-matrix is nondegenerate.
The numbers $d_i=S_{i,0}$ are called the quantum dimensions of the simple objects $X_i\in \Irr(\mcC)$. The sum $D^2=\sum_{i=0}^{n-1}d_i^2$ is called the global dimension of $\mcC$.

As the ribbon isomorphism $\theta_{X_{i}}$ is an element of $\End(X_i)$ for any $X_i\in \Irr (\mcC)$, we can write $\theta_{X_i}$ as a scalar $\theta_i$ times the identity map on $X_i$. We call $\theta_i$ the twist of the simple object $X_i$. The $T$-matrix for a premodular $\mathcal C$ is defined to be the diagonal matrix with entries
$$T_{i,j}=\theta_i\delta_{i,j},$$
where $\theta_i$ is the twist of $X_i\in \Irr(\mcC)$.
Note the fusion coefficients and entries of $S$ and $T$ satisfy the following balancing equation
\begin{equation}\label{equ:balancing}
    \theta_{i} \theta_{j} S_{i j}=\sum_{k} N_{i^{*} j}^{k} d_{k} \theta_{k},
\end{equation}
where $i^*$ is the dual of $i$.

Given a fusion category $\mcC$, let $\mcC_{\pt}$ denote the full fusion subcategory generated by the invertible objects in $\mcC$. A fusion category $\mcC$ is said to be pointed if $\mcC=\mcC_{\pt}$. Every pointed fusion category is equivalent to  $\Vec_{G}^{\omega}$, which is  the category of finite dimensional vector spaces graded by a finite group $G$ with the associativity given by the 3-cocycle $\omega \in Z^{3}(G, \mbbC^{\times})$.

Let $G$ be a finite Abelian group, $q : G \to \mathbb{C}^\times$ be a quadratic form,\footnote{Recall that a quadratic form on an Abelian group $G$ taking values in $B$ is a map $q : G \to B$ such that (i) $q(g^{-1}) = q(g)$ for all $g \in G$, and (ii) the symmetric function $b(g,h) := \frac{q(gh)}{q(g)q(h)}$ is bimultiplicative, i.e. $b(gh,k) = b(g, k)b(h, k)$  for all $g,h,k \in G$.} and $\chi : G \to \mathbb{C}^\times$ be a character such that $\chi^2 = 1$.  As shown in \cite{drinfeld2010braided}, there exists a pointed premodular category $\mathcal{C}(G,q,\chi)$ with the following properties: 
\begin{itemize}
    \item the simple objects of $\mathcal{C}(G,q,\chi)$ are parametrized by $G$, and the monoidal product is given by the group product;
    \item $S_{gh} = b(g,h) \chi(g) \chi(h)$, where $b$ is the bicharacter $b(g,h) := \frac{q(gh)}{q(g)q(h)}$; and
    \item $T_g = q(g) \chi(g)$.
\end{itemize}
Moreover, every pointed premodular category is equivalent to some $\mathcal{C}(G,q,\chi)$. When $\chi$ is trivial, we simply denote it as $\mathcal{C}(G,q)$.

\subsection{Equivariantization} \label{sec:equi} 
For a group $\Gamma$, let $\underline \Gamma$ be the tensor category whose objects are elements of $\Gamma$ and morphisms are identities. The tensor product is given by the multiplication of $\Gamma$. Let $\mathcal C$ be a fusion category with an action of $\Gamma$ on $\mathcal C$ given by the tensor functor  $T : \underline \Gamma \to \Aut_\otimes(\mathcal{C}); \ g\mapsto T_g$.  For any $g, h \in \Gamma$ let $\nu_{g, h}$ be the isomorphism $T_{g} \circ T_{h} \simeq T_{g h}$ that defines the tensor structure on the functor $T$. A $\Gamma$-equivariant object is a pair $(X, u)$, where $X\in \mathcal C$  and $u=\left\{u_{g}: T_{g}(X) \stackrel{\sim}{\rightarrow} X \mid g \in \Gamma\right\}$, such that $u_{g h} \circ \nu_{g, h}=u_{g} \circ T_{g}\left(u_{h}\right)$ for all $g, h\in \Gamma$. The morphisms between equivariant objects are morphisms in $\mathcal{C}$ commuting with $u_g$ for all  $g\in \Gamma$.  More explicitly, a morphism between equivariant objects $(X,u) \to (Y,v)$ consists of a morphism $f: X \to Y$ such that $f \circ u_{g}=v_{g} \circ T_{g}(f)$. The category of $\Gamma$-equivariant objects of $\mathcal{C}$, which is denoted by $\mathcal{C}^{\Gamma}$, is called  the $\Gamma$-equivariantization of $\mathcal{C}$ \cite{muger2000galois,bruguieres2000categories,drinfeld2010braided}. $\mathcal C^\Gamma$ is a fusion category with the tensor product given by $(X, u) \otimes(Y, w):=(X \otimes Y, u \otimes w)$, where $(u \otimes w)_{g}:=(u_{g} \otimes w_{g}) \circ(\mu_{X, Y}^{g})^{-1}$ and $\mu_{X,Y}^g: T_g(X)\otimes T_g(Y)\to T_g(X\otimes Y)$ is the tensorator for $T$.

\subsection{A program to construct premodular categories from three manifolds}
We first recall two key ingredients, the Chern-Simons invariant and the Reidemeister torsion, that are used in the construction. 

\subsubsection{Chern-Simons invariant}

Let $X$ be a closed oriented 3-manifold and $\rho:\pi_1(X)\longrightarrow \SL(2,\mathbb{C})$ be a group morphism. Denote by $A_{\rho} $ the corresponding Lie algebra $\mathfrak{sl}(2,\mathbb{C})$-valued 1-form  on $X$. The Chern-Simons (CS) invariant of $\rho$ is defined as
\begin{equation}\label{equ:CS_integral_def}
    \CS(\rho)=\frac{1}{8\pi^2}\int_{X}\Tr(dA_{\rho}\wedge A_{\rho}+\frac{2}{3}A_{\rho}\wedge A_{\rho}\wedge A_{\rho}) \mod 1.
\end{equation}
It is a basic property that $\CS(\rho)$ only depends on the character induced by $\rho$. We will use this fact below implicitly. It is in general very difficult to compute the CS invariant directly using the integral definition. Various techniques are developed in the literature for the calculations. See for instance, \cite{auckly94}, \cite{kirk93}. Usually, the procedure involves cutting the manifold into simpler pieces, computing the CS invariant for each piece, and inferring the CS invariant of the target manifold from that of the pieces.

\subsubsection{Adjoint Reidemeister torsion}
We first recall some basics about  the Reidemeister torsion ($R$-torsion). For more details, please refer to, e.g., \cite{milnor66} and \cite{turaev01}.

Let 
$$C_*=(0\longrightarrow C_n\stackrel{\partial_n}{\longrightarrow}C_{n-1}\stackrel{\partial_{n-1}}{\longrightarrow}\ \cdots\ \stackrel{\partial_1}{\longrightarrow}C_0\longrightarrow0)$$
be a
chain complex of finite dimensional vector spaces over the field $\mathbb{C}$. Choose a basis $c_i$ of $C_i$ and a basis $h_i$ of the $i$-th homology group $H_i(C_*)$. The torsion of $C_*$ with respect to
these choices of bases is defined as follows. For each $i$, let $b_i$ be a set of vectors in $C_i$ such that $\partial_i(b_i)$ is a basis of $\Ima(\partial_i)$ and let $\tilde{h}_i$ denote a lift of $h_i$ in $\Kernel(\partial_i)$. Then the
set of vectors $\tilde{b}_i := \partial_{i+1}(b_{i+1}) \sqcup \tilde{h}_i \sqcup b_i$ is a basis of $C_i$. Let $D_i$ be the transition matrix from $c_i$ to $\tilde{b}_i$. To be specific, each column of $D_i$ corresponds to a vector in $\tilde{b}_i$ being expressed as a linear combination of vectors in $c_i$.  
Define the torsion
$$\tau(C_*,c_*,h_*):=\left|\prod_{i=0}^n \ \text{det}(D_i)^{(-1)^{i+1}}\right|
$$

We remark that the torsion does not depend on the choice of $b_i$ and the lifting of $h_i$. Also, we define the torsion as the absolute value of the usual torsion in the literature, and thus we do not need to deal with sign ambiguities.

Let $X$ be a finite CW-complex and $(V,\rho)$ be a homomorphism $\rho:\pi_1(X)\longrightarrow \SL(V)$ for some vector space $V$. Then
 $V$ turns into a left $\mathbb{Z}[\pi_1(X)]$-module via $\rho$. The universal cover $\tilde{X}$ has a natural CW structure from $X$, and its chain complex $C_*(\tilde{X})$ is a free \textit{left} $\mathbb{Z}[\pi_1(X)]$-module
via the action of $\pi_1(X)$ as covering transformations. View $C_*(\tilde{X})$ as a \textit{right} $\mathbb{Z}[\pi_1(X)]$-module by $\sigma.g := g^{-1}.\sigma$ for $\sigma \in C_*(\tilde{X})$ and $g \in \pi_1(X)$.
 We define the twisted chain complex $C_*(X;\rho):= C_*(\tilde{X})\otimes_{\mathbb{Z}[\pi_1(X)]}V$.
Let $\{e_{\alpha}^i\}_{\alpha}$ be the set of $i$-cells of $X$ ordered in an arbitrary way.
Choose a lifting $\tilde{e}_{\alpha}^i$ of $e_{\alpha}^i$ in $\tilde{X}$. It follows that $C_i(\tilde{X})$ is generated by $\{\tilde{e}_{\alpha}^i\}_{\alpha}$ as a free $\mathbb{Z}[\pi_1(X)]$-module (left or right). Choose a basis $\{v_{\gamma}\}_{\gamma}$ of $V$. Then $c_i(\rho):= \{\tilde{e}_{\alpha}^i \otimes v_{\gamma}\}$ is a $\bbC$-basis of $C_i(X;\rho)$. 
\begin{definition}\label{def:adj_cyclic}
Let $\rho:\pi_1(X)\longrightarrow \SL(V)$ be a representation.
\begin{enumerate}
    \item  We call $\rho$ acyclic if $C_*(X;\rho)$ is acyclic. Assume $\rho$ is acyclic. The torsion of $X$ twisted by $\rho$ is defined to be,
    \begin{equation*}
        \tau(X;\rho):= \tau\biggl(C_*(X;\rho),\, c_*(\rho)\biggr).
    \end{equation*}
    \item Let $\Adj: \SL(V) \to \SL(\mathfrak{sl}(V))$ be the adjoint representation of $\SL(V)$ on its Lie algebra $\mathfrak{sl}(V)$. We call $\rho$ \textit{adjoint acyclic} if $\Adj \circ \rho$ is acyclic. Assume $\rho$ is adjoint acyclic.  Define the \textit{adjoint Reidemeister torsion} of $\rho$ to be,
    \begin{equation*}
       \Tor(\rho):= \Tor(X;\rho):= \tau(X; \Adj \circ \rho).
    \end{equation*}
\end{enumerate}
\end{definition}
We remark that $\tau(X;\rho)$ is independent of the choices made for the liftings $\tilde{e}^i_{\alpha}$ of $e^i_{\alpha}$ and for the basis $\{v_{\gamma}\}$ of $V$. In this paper, we will only deal with the adjoint Reidemeister torsion $\rho$. In the sequel, we simply call it the torsion of $\rho$ if there is no potential confusion.

\subsubsection{Constructing modular data from 3-manifolds}\label{sec:contruct}
In this subsection, we briefly review the construction of modular data from 3-manifolds explained in \cite{cho2020m} and \cite{cui2021three}.  We refer the readers to \cite{cui2021three} for more detailed discussions. 

\begin{definition}
Let $X$ be a closed oriented 3-manifold, and let $\chi$ be an $\SL(2,\bbC)$-character of $X$.
\begin{itemize}
    \item $\chi$ is non-Abelian if at least one representation $\rho: \pi_1(X)\rightarrow \SL(2,\bbC)$  with character $\chi$ is non-Abelian, i.e. $\rho$ has non-Abelian image in $\SL(2,\bbC)$. 
    \item A non-Abelian character $\chi$ is {\it adjoint-acyclic} if  all non-Abelian representations $\rho: \pi_1(X)\rightarrow \SL(2,\bbC)$ with character $\chi$ are adjoint-acyclic and have the same adjoint Reidemeister torsion.
    \item A candidate label set $L(X)$  is a finite set of adjoint-acyclic non-Abelian $\SL(2,\bbC)$ characters of $X$ with a pre-chosen character $\chi_0$ such that $\CS(\chi) - \CS(\chi_0)$ is a rational number for any $\chi \in L(X)$. 
\end{itemize}
\end{definition}

We remark that for the torus bundles $M$ over the circle to be considered in Section \ref{sec:compare}, there are only finitely many non-Abelian characters and all of them are adjoint-acyclic. We always choose $L(M)$ to be the set of all non-Abelian characters.  

The potential premodular category corresponding to $X$ has $L(X)$ as the set of isomorphism classes of simple objects. Each character in $L(X)$ is a simple object type, and the pre-chosen one $\chi_0$ is the tensor unit.

The CS and torsion invariants are both well defined for characters in $L(X)$ (the latter being by definition since the label set consists of adjoint-acyclic characters). They are related to the twists and quantum dimensions. Specifically, denote by $\theta_\alpha$ the twist, and by $d_\alpha$ the quantum dimension, of $\chi_\alpha \in L(X)$. Also denote by $D^2$ the total dimension squared of the potential premodular category. Then
\begin{align}
         \theta_\alpha &=e^{-2\pi i (\CS(\chi_\alpha)-\CS(\chi_0))},\label{equ:CS_is_twist}\\
         D^2&=2\Tor(\chi_0)\label{equ:torsion0_is_D},\\
         d_\alpha^2 &=\frac{D^2}{2 \Tor(\chi_\alpha)}.\label{equ:torsion_is_qdim}
\end{align}

Apparently, for the above structures to be realized by a genuine premodular category, there must be some constraints on the label set. See \cite{cui2021three} for the definition of an admissible label set. Below, we will always assume $L(X)$ is admissible.

To define the $S$-matrix, we first need to introduce the notion of loop operators.
\begin{definition}
A primitive loop operator of $X$ is a pair $(a,R)$, where $a$ is a conjugacy class of the fundamental group $\pi_1(X)$ of $X$ and $R$ a finite dimensional irreducible representation of $\SL(2,\bbC)$.
\end{definition}

Given an $\SL(2,\bbC)$-representation $\rho$ of $\pi_1(X)$ and a primitive loop operator $(a,R)$, the weight of the loop operator $(a,R)$ with respect to $\rho$ is $W_{\rho}(a,R):=\Tr_R(\rho(a))$. It can be shown that $W_{\rho}(a,R)$ only depends on the character of $\rho$ for a fixed
choice of a primitive loop operator $(a, R)$. Hence, for an $\SL(2,\bbC)$-character $\chi$, we define $W_{\chi}(a,R)$ to be $W_{\rho}(a,R)$ for any $\rho$ representing $\chi$.

It is assumed that each simple object type  $\chi_{\alpha}$  corresponds to a finite collection of primitive loop operators
\begin{equation}\label{equ:local_operators}
    \chi_{\alpha} \mapsto \{(a^{\kappa}_{\alpha}, R^{\kappa}_{\alpha})\}_{\kappa},
\end{equation}
where $\kappa$ indexes the different primitive loop operators corresponding to $\chi_{\alpha}$. 
For a choice of $\epsilon = \pm 1$, we define the $W$-symbols
\begin{equation}\label{equ:Wbetaalpha}
    W_{\beta}(\alpha)\  :=\  \prod_{\kappa} W_{\epsilon\, \chi_{\beta}}(a_{\alpha}^{\kappa},R_{\alpha}^{\kappa}), \quad \chi_{\alpha}, \chi_{\beta} \in L(X).
\end{equation}
That is, $W_{\beta}(\alpha)$ is the product of the weight of $(a_{\alpha}^{\kappa},R_{\alpha}^{\kappa})$ with respect to the character $\epsilon \chi_{\beta}$. Here the product is taken over all primitive loop operators corresponding to $\chi_{\alpha}$, and $\epsilon \chi_{\beta}$ is the character obtained by multiplying the sign $\epsilon$ to $\chi_{\beta}$. 
The $W$-symbols and the unnormalized $S$-matrix 
are related by
\begin{equation}\label{equ:SW_relation}
    W_{\beta}(\alpha) \ = \  \frac{{S}_{\alpha\beta}}{{S}_{0\beta}} \quad \text{or} \quad{S}_{\alpha\beta}\  = \  W_{\beta}(\alpha)W_{0}(\beta),
\end{equation}
where $0$ denotes the tensor unit $\chi_0$. In particular, the quantum dimension is given by the equation
\begin{equation}\label{equ:d_is_W0}
    d_{\alpha} = W_{0}(\alpha)
\end{equation}

Unfortunately,  we do not yet know how to algorithmically define the correspondence between simple objects and loop operators, as well as the choice of $\epsilon$. Both currently involve a trial-and-error procedure. We try to guess a form of the correspondence, and check whether the resulting $S$-matrix  is consistent with other data such as the twists and the quantum dimensions obtained in Equations \ref{equ:CS_is_twist} and \ref{equ:torsion_is_qdim}. In particular, using loop operators we can compute the quantum dimension of simple objects (from the first row of the $S$-matrix). On the other hand, Equation \ref{equ:torsion_is_qdim} gives the absolute value of quantum dimension in terms of adjoint Reidemeister torsion. These two ways of computing quantum dimension place some constraints on loop operators. In practice, those constraints are sufficient to obtain loop operators.

\section{Equivariantization of particle-hole symmetry}\label{sec:equiv}
 Let $\mathcal{C}(G, q)$ denote the premodular category associated to a finite Abelian group $G$ and a quadratic form $q : G \to \mathbb{C}$ as defined in \cite{drinfeld2010braided}. In this section, we consider the  $\mathbb Z_2$-equivariantization $\mcC(G,q)^{\mathbb Z_2}$ of this premodular category, where the action $\underline{\mbbZ_2} \to \Aut_\otimes(\mcC(G,q))$ corresponds to the involution $g \mapsto-g$ in $G$. Commonly referred to as the ``particle-hole symmetry,'' this action previously appeared in the classification of metaplectic modular categories 
 \cite{ardonne2016classification,bruillard2019modular,bruillard2020dimension} 
 and equivariantization of Tambara-Yamagami categories  \cite{gelaki2009centers}.  It is clear that this action preserves the braiding as well since any quadratic form is invariant under inversion of its argument, and for any braided pointed fusion category $\mcC(G,q)$ the braiding is given by the bilinear form associated to $q$.

\begin{prop}\label{prop:fusion}
As a fusion category, $\mcC (G,q)^{\mbbZ_2}$  has the following simple objects:
\begin{itemize}
    \item[] Invertible objects: $X^+_b$,  $X^-_b$, for each $b\in G$ such that $b=-b$.

    \item[] Two-dimensional objects: $Y_{\{a,-a\}}$  for each $a\in G$ such that $a\neq -a$.
\end{itemize}
For simplicity, we denote $Y_a:=Y_{\{a,-a\}}$, and hence $Y_a = Y_{-a}$.     

The fusion rules of $\mcC (G,q)^{\mbbZ_2}$  are given by

\begin{itemize}
    \item[]  $X_b^{\epsilon}\otimes X_{b'}^{\epsilon'}\cong X_{b+b'}^{\epsilon\epsilon'}$,
    \item[] $X_b^{\epsilon}\otimes Y_a\cong Y_{a+b}$, 
    \item[] $Y_a\otimes Y_{a'}\cong \left\{\begin{array}{ll}
 X_0^+\oplus X_0^- \oplus  Y_{2a},&  \text{ if } a=\pm a',\\
Y_{a+a'} \oplus Y_{a-a'}, &  \text{ if }  a\neq\pm a',
\end{array}\right.$ where $\epsilon, \epsilon' =\pm 1$.
\end{itemize}
\end{prop}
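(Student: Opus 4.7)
The plan is to apply the standard classification of simple objects in a fusion category equivariantization $\mcC^\Gamma$ in terms of $\Gamma$-orbits on $\Irr(\mcC)$ together with projective representations of the inertia subgroups (see e.g.\ \cite{etingof2016tensor}). Here $\Gamma = \bbZ_2$ acts on $\Irr(\mcC(G,q)) = G$ by $a \mapsto -a$, producing two kinds of orbits: fixed points $\{b\}$ with $2b = 0$ (stabilizer $\bbZ_2$) and free two-element orbits $\{a, -a\}$ (trivial stabilizer). For each fixed point the relevant 2-cocycle on the stabilizer $\bbZ_2$ is trivial (after passing to a strict skeletal model in which the tensorator $\nu_{g,g}$ of the $\bbZ_2$-action is the identity), so an equivariant structure on $b$ amounts to a single sign $u_g \in \{\pm 1\}$, giving the two invertible lifts $X_b^\pm$. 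For a free orbit, induction produces a single two-dimensional simple $Y_a$ whose underlying object is $a \oplus (-a)$ with equivariant structure swapping the two summands.

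For the fusion rules, I would use the forgetful functor $F\colon \mcC(G,q)^{\bbZ_2} \to \mcC(G,q)$ and its two-sided adjoint, the induction functor $I$. One has $F(X_b^\pm) = b$, $F(Y_a) = a \oplus (-a)$, $I(b) = X_b^+ \oplus X_b^-$, and $I(a) = Y_a$. The projection formula $I(X) \otimes Z \cong I(X \otimes F(Z))$ then reduces every fusion in $\mcC(G,q)^{\bbZ_2}$ to a computation in $\mcC(G,q)$, followed by decomposing $I$ of a sum of group elements into equivariant simples according to whether each summand is a fixed point or lies in a free orbit. Concretely, $X_b^\epsilon \otimes X_{b'}^{\epsilon'}$ has underlying object $b+b'$, still a fixed point, and the induced equivariant sign is the product $\epsilon \epsilon'$, yielding $X_{b+b'}^{\epsilon\epsilon'}$. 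For $X_b^\epsilon \otimes Y_a \cong I(a) \otimes X_b^\epsilon \cong I(a+b)$; since $2(a+b) = 2a \neq 0$, the element $a+b$ lies in a free orbit, giving $Y_{a+b}$. For $Y_a \otimes Y_{a'} = I(a) \otimes Y_{a'} \cong I(a+a') \oplus I(a-a')$: when $a \neq \pm a'$ both summands lie in free orbits, producing $Y_{a+a'} \oplus Y_{a-a'}$; when $a' = \pm a$ one summand is $I(0) = X_0^+ \oplus X_0^-$ and the other is $I(\pm 2a) = Y_{2a}$ (extending the notation $Y_c := X_c^+ \oplus X_c^-$ when $c$ is a fixed point, as needed in the degenerate case $4a = 0$).

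The main obstacle I anticipate is sign tracking: verifying that $X_b^\epsilon \otimes X_{b'}^{\epsilon'}$ really carries equivariant sign $\epsilon\epsilon'$ without corrections from the associator of $\mcC(G,q)$ or from the tensor structure $\mu^g_{X,Y}$ on the functor $T_g$. I would resolve this by working in a strict skeletal model in which both $\nu_{g,g}$ and $\mu^g_{X,Y}$ reduce to identities, so that the definition $(u \otimes u')_g = (u_g \otimes u'_g) \circ (\mu^g_{X,X'})^{-1}$ directly produces the product of signs. The identification $I(b) = X_b^+ \oplus X_b^-$ for every fixed $b$ (and in particular for $b=0$ in the $Y_a\otimes Y_{\pm a}$ case) then follows because the induced equivariant structure on $I(b)$ realizes the regular representation of the stabilizer $\bbZ_2$, which decomposes into its two one-dimensional irreducibles with multiplicity one each.
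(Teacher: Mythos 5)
Your route is genuinely different from the paper's, and mostly sound. The paper proceeds by hand: it writes down explicit equivariant structures $(g,u^{\pm})$ and $(g\oplus -g,u)$, verifies simplicity by computing endomorphism rings directly, and establishes completeness by comparing the sum of squared Frobenius--Perron dimensions with the categorical dimension of $\mcC(G,q)^{\mbbZ_2}$, which is twice that of $\mcC(G,q)$ by \cite[Prop.~7.21.15]{etingof2016tensor}; the fusion rules are then left as ``a simple calculation.'' You instead invoke the general orbit/stabilizer classification of simples in an equivariantization and the projection formula $I(X)\otimes Z\cong I(X\otimes F(Z))$ for the induction functor. Your approach buys automatic completeness (no dimension count needed, though you should note that the vanishing of the relevant $2$-cocycle class is guaranteed outright by $H^2(\mbbZ_2,\mathbb{C}^\times)=0$, independent of any strictification) and a uniform derivation of all three fusion rules from $I(b)=X_b^+\oplus X_b^-$ and $I(a)=Y_a$; the paper's approach buys explicitness and self-containedness. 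Your attention to the sign-tracking for $X_b^{\epsilon}\otimes X_{b'}^{\epsilon'}$ and to the degenerate case $4a=0$ in $Y_a\otimes Y_a$ is a genuine improvement in care over the paper's proof.

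One claim in your argument is false as stated: in the case $a\neq\pm a'$ you assert that both $a+a'$ and $a-a'$ lie in free orbits. Take $G=\mathbb{Z}_8$, $a=1$, $a'=3$: then $a\neq\pm a'$ but $a+a'=4$ is a fixed point, so $I(a+a')=X_4^+\oplus X_4^-$ rather than a two-dimensional simple (and in $G=\mathbb{Z}_4\times\mathbb{Z}_4$ with $a=(1,1)$, $a'=(1,3)$ both summands degenerate). The conclusion survives because the convention $Y_c:=X_c^+\oplus X_c^-$ for $2c=0$, which you already introduce for the case $a'=\pm a$, applies equally here and is exactly $I(c)$; but you must invoke it in this case too rather than claim the orbits are free. (The proposition as printed is subject to the same caveat, so this is a defect of the statement as much as of your proof; just make the convention explicit and uniform.)
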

\begin{proof}
In the notation of Section~\ref{sec:equiv}, we can pick the following representatives for each isomorphism class of simple objects: 
$X_{g}^{\pm}$ is given by $\left(g, u^{\pm}\right)$, where $u_{\varepsilon}^{\pm}: g \rightarrow g$ is given by $u_{\varepsilon}^{\pm}=(\pm 1)^{\varepsilon} \mathrm{id}_{g}$ for every $\varepsilon \in \mathbb{Z}_{2}$. Similarly, for all $g \neq-g$, there is a $\mathbb{Z}_{2}$-equivariant object $Y_{g}$ given by $\left(g \oplus-g, u\right)$, where $u_{0}: g \oplus-g \rightarrow g \oplus-g$ is given by
$$
\left(\begin{array}{cc}
\mathrm{id}_{g} & 0 \\
0 & \mathrm{id}_{-g}
\end{array}\right) \text {, }
$$
while $u_{1}:-g \oplus g \rightarrow g \oplus-g$ is given by
$$
\left(\begin{array}{cc}
0 & \mathrm{id}_{g} \\
\mathrm{id}_{-g} & 0
\end{array}\right).
$$

To see that these objects are simple, one can easily check that their endomorphism rings are one-dimensional.  For example, if $f: Y_g \to Y_g$ is a $\mathbb{Z}_2$-equivariant morphism, then $f = x\, \mathrm{id}_g \oplus y \,\mathrm{id}_{-g}$ and $f \circ u_1 = u_1 \circ T_1(f) = u_1 \circ (x\, \mathrm{id}_{-g} \oplus y\, \mathrm{id}_{g}) = y\, \mathrm{id}_{g} \oplus x\, \mathrm{id}_{-g} $.  This implies $x = y$.

These simple objects are clearly pairwise non-isomorphic (except $Y_a = Y_{-a}$ as mentioned in the statement of the theorem), and the fusion rules follow from a simple calculation.  To see that they form a complete set of representatives, one can compare the sum of the squares of their Frobenius-Perron dimensions with the categorical dimension of $\mcC (G,q)^{\mbbZ_2}$, which must be twice that of $\mcC (G,q)$ by \cite[Prop.~7.21.15]{etingof2016tensor}.
\end{proof}

Table~\ref{tab:simpleobj} goes into more detail in the special case  $G=\mathbb{Z}_r \times  \mathbb{Z}_{N/r}$.

\begin{table}[h!]
\renewcommand{\arraystretch}{1.2}
\begin{tabular}{|l|c|c|c|c|}
\hline
$(r, \frac{N}{r})$ &
  $X^\pm_{(a,b)}$ &
   $\big|\Irr\big(\mathcal{C}\left(\mathbb{Z}_{r} \times \mathbb{Z}_{N / r}, q\right)^{\mathbb{Z}_{2}}_{\text{pt}}\big)\big|$ &
  $Y_{(a, b)}$ &
  Number of $Y_{(a, b)}$ \\ \hline
$(o,o)$ &
  $(a,b)\in\langle(0,0)\rangle$ &
  $2$ &
  \begin{tabular}[c]{@{}c@{}}$a=1, \cdots, \frac{r-1}{2}$,\\ $b=1,\cdots, \frac{N/r-1}{2}$\end{tabular} &
  $\frac{N-1}{2}$ \\ \hline
$(o,e)$ &
  $(a,b)\in\langle(0, \frac{N}{2r})\rangle$ &
  $4$ &
  \begin{tabular}[c]{@{}c@{}}$a=1, \cdots, \frac{r-1}{2}$, \\ $b=1,\cdots, \frac{N}{2r}-1$\end{tabular} &
  $\frac{N}{2}-1$ \\ \hline
$(e,o)$ &
  $(a,b)\in\langle(\frac{r}{2},0)\rangle$ &
  $4$ &
  \begin{tabular}[c]{@{}c@{}}$a=1, \cdots, \frac{r}{2}-1$, \\ $b=1,\cdots, \frac{N/r-1}{2}$\end{tabular} &
  $\frac{N}{2}-1$ \\ \hline
$(e,e)$ &
  $(a,b)\in\langle(\frac{r}{2},0),(0,\frac{N}{2r})\rangle$ &
  $8$ &
  \begin{tabular}[c]{@{}c@{}}$a =1, \cdots, \frac{r}{2}-1$, \\ $b=1,\cdots, \frac{N}{2r}-1$\end{tabular} &
  $\frac{N}{2}-2$ \\ \hline
\end{tabular}
\caption{Simple objects for $\mcC(\mbbZ_r\times\mbbZ_{N/r} ,q)^{\mbbZ_2}$.  In the first column, we use `e' to denote `even' and `o' for `odd'.}
\label{tab:simpleobj}
\end{table}

\subsection{$S$- and $T$- matrices in a special case}\label{subsec:SandT}
We now specialize to the case that the minimal number of generators for $G$ is at most 2. Fixing a surjective homomorphism $\mathbb{Z} \times \mathbb{Z} \to G$, we further assume the existence of a well-defined quadratic form $\tq:G\to \mbbZ_N$ given by 
\begin{equation}
    \tq(x_1, x_2) = c_1 x_1^2+c_2 x_1x_2+c_3 x_2^2
\end{equation}
for some $c_1, c_2, c_3\in \mbbZ$ and independent of the choice of representative $(x_1, x_2) \in \mathbb{Z} \times \mathbb{Z}$.   We denote the associated bilinear form by $\lambda$, where $
\lambda: G\times G\mapsto \mathbb Z_N$ defined by $\lambda(x,y)=\tilde q(x+y)-\tilde q(x)-\tilde q(y)$, where $x=(x_1,x_2), y=(y_1,y_2)\in G$. Thus $\lambda$ can be expressed explicitly as
\begin{equation}
    \lambda(x,y)=2 c_1 x_1 y_1 + c_2(x_1 y_2+x_2 y_1)+ 2 c_3 x_2 y_2.
\end{equation}

In this case, we consider the pointed premodular category $\mathcal C(G,q)$ where $q$ is a quadratic form $q:G\to U(1)$ defined by $q=\exp{\frac{2\pi i \tilde q}{N}}$.  Let $F: \mathcal{C}(G,q)^{\mathbb{Z}_2} \to \mathcal{C}(G,q) $ be the forgetful functor.    We can equip the fusion category $\mathcal C(G,q)^{\mathbb{Z}_2}$ defined in the previous section with a premodular structure as follows. We define the braiding $c_{X,Y}$ in $\mathcal{C}(G,q)^{\mathbb{Z}_2}$ by $c_{X,Y} = c_{F(X), F(Y)}$.   Similarly, we define $\theta_X$ for $X \in \mathcal{C}(G,q)^{\mathbb{Z}_2}$ by $\theta_X = \theta_{F(X)}$.  

Combining the twists with the fusion rules described in Proposition \ref{prop:fusion}, we compute the corresponding $S$-matrix using the balancing equation \ref{equ:balancing}:

\begin{itemize}
    \item $S_{X_{(a,b)}^{\pm}, X_{(a', b')}^{\pm}}=\exp{\left(\dfrac{2\pi i }{N}\lambda(a,b,a', b')\right)}$;
    \item $S_{X_{(a,b)}^{\pm}, Y_{(a', b')}}=2\exp{\left(\dfrac{2\pi i }{N}\lambda(a,b,a', b')\right)}$;
    \item $S_{Y_{(a, b)}, Y_{(a', b')}} = 4\cos\left(\dfrac{2\pi}{N}\lambda\left(a,b,a',b'\right)\right)$.
\end{itemize}

\section{Premodular categories from SOL geometry}\label{sec:compare}
In this section, we consider a class of 3-manifolds with SOL geometry. Let $M$ be a torus bundle over the circle $S^1$ with the monodromy map $A=\begin{pmatrix}a&b\\c&d\end{pmatrix}\in \SL(2,\mathbb{Z})$. That is, $M$ is obtained from the product of the torus $T^2$ with the interval $[0,1]$ by identifying the top and the bottom tori via a self diffeomorphism $A$. It is known that $M$ has the SOL geometry if and only if  $|a+d|>2$ which is to be assumed below. We first provide the character variety of $M$, and then show that the modular data produced from $M$ is realized by the $\mathbb{Z}_2$ equivariantization of some pointed premodular categories. Throughout this section, set $N = |a+d+2| > 0$. 

\subsection{Character variety of torus bundles}\label{subsec:character}
Let $M$ be a torus bundle as above. Its fundamental group has the presentation
\begin{equation}\label{equ:torus_pi1}
\pi_1(M)=\langle x,y,h\ |\ x^ay^c=h^{-1}xh,\ x^by^d=h^{-1}yh,\ xyx^{-1}y^{-1}=1 \rangle,
\end{equation}
where $x$ and $y$ are the meridian and longitude of the torus, respectively, and $h$ corresponds to a loop around the $S^1$ component. This presentation
of $\pi_1(M)$ can be deduced directly from the standard CW-complex structure
on M. We consider non-Abelian characters of representations $ \rho: \pi_1(M) \to \SL(2,\bbC)$. According to \cite{cui2021three}, a representation realizing each character is described as follows.

The irreducible representations are given by
\begin{equation}\label{equ:torus_irrep_form}
    x \mapsto
    \begin{pmatrix}
    e^{\frac{2 \pi i k}{N}} &0\\
    0&e^{-\frac{2 \pi i k}{N}}
    \end{pmatrix},\quad 
    y \mapsto
    \begin{pmatrix}
   e^{\frac{2 \pi i l}{N}}&0\\
    0&e^{-\frac{2 \pi i l}{N}}
    \end{pmatrix},\quad
    h \mapsto
    \begin{pmatrix}
    0&1\\
    -1&0
    \end{pmatrix},
\end{equation}
where $ \Im(e^{\frac{2\pi ik}{N}})\ge 0$ and either $ e^{\frac{2 \pi i k}{N}} \neq \pm 1$ or $ e^{\frac{2 \pi i l}{N}}\neq \pm 1$ and the following equations hold, \begin{equation}\label{equ:torus_rep_equation2}
\begin{split}
    (a+1)\,k + c\, l &= \mu N\\
    b\, k + (d+1)\,l &= \nu N
\end{split}
\end{equation}
for some integers $\mu$ and $\nu$.   Since the  coefficient matrix for Equation \ref{equ:torus_rep_equation2} is nonsingular (its determinant is $\pm N$), each irreducible representation is determined by the pair $(\mu,\nu)$ and hence denoted $Y(\mu,\nu)$.

The reducible representations are of the form
\begin{equation}\label{equ:torus_reducible_form}
    x \mapsto (-1)^{\epsilon_x}
    \begin{pmatrix}
    1&1\\
    0&1
    \end{pmatrix},\quad
    y \mapsto (-1)^{\epsilon_y}
    \begin{pmatrix}
    1&u\\
    0&1
    \end{pmatrix},\quad
    h \mapsto
    \begin{pmatrix}
    v&0\\
    0&v^{-1}
    \end{pmatrix},
\end{equation}
where $\epsilon_x, \epsilon_y \in \{0,1\}$, $u \neq 0$ and
\begin{equation}\label{equ:torus_rep_equation5}
\begin{split}
    (v+v^{-1})^2 = a+d+2, \quad u = \frac{v^{-2}-a}{c}.
\end{split}
\end{equation}

Let $P$ be the quadruple that records the parity of the entries $(a,d;b,c)$ and we use $`e\textrm'$ to denote for `even' and $`o\textrm'$ for `odd'. For instance, $P = (e,e;o,e)$ means $b$ is odd and the rest are even. We then have the following possible values for $\epsilon_x$ and $\epsilon_y$ in each case:
\begin{itemize}
    \item $\epsilon_x = 0, \ \epsilon_y = 0$, with no restrictions on $P$;
    \item $\epsilon_x = 1, \ \epsilon_y = 1$, only if $P = (e,e;o,o)$ or $P= (o,o; e,e)$;
    \item $\epsilon_x = 0, \ \epsilon_y = 1$, only if $P = (o,o;o,e)$ or $P= (o,o; e,e)$;
    \item $\epsilon_x = 1, \ \epsilon_y = 0$, only if $P = (o,o;e,o)$ or $P= (o,o; e,e)$.
\end{itemize}

 We can also refer to pairs $(\epsilon_x, \epsilon_y)$ in $(\mu, \nu)$-coordinates using Equation~\ref{equ:torus_rep_equation2} and defining $k = \epsilon_x (N/2)$ and $l = \epsilon_y (N/2)$.    From Equation \ref{equ:torus_rep_equation5}, we see that for each fixed $\epsilon_x$ and $\epsilon_y$, there are four inequivalent representations but only two characters, which we denote by $X^{\pm}(\mu, \nu)$.

The torsions and Chern-Simons invariants are explicitly computed in \cite{cui2021three}. In particular, we have 
\begin{equation}\label{eq:torsion}
   \Tor(\rho)=\begin{cases}\dfrac{|a+d+2|}{4}, & \rho \text{ is irreducible}  \\|a+d+2|,& \rho \text{ is reducible} \end{cases} 
\end{equation}
and
\begin{equation}\label{eq:CS}
    \CS(\rho)=\begin{cases}\dfrac{k\nu-l\mu}{N} &\rho \text{ is irreducible}
\\
\dfrac{(a+d+2) \epsilon_{x} \epsilon_{y}+b \epsilon_{x}+c \epsilon_{y}}{4}&\rho\text{ is reducible}
\end{cases}
\end{equation}

\subsection{Solution space}
We consider solutions $(k,l)$ of Equation \ref{equ:torus_rep_equation2} in $\mathbb{Z}_N\times\mathbb{Z}_N$. Note that, for now we do not place any additional restrictions on the solutions. We denote this solution space by $G$. 

\begin{lemma}\label{lem:def_G}
$G$ is a subgroup of $\mathbb{Z}_N\times\mathbb{Z}_N$ isomorphic to $\mathbb{Z}_r\times\mathbb{Z}_{\frac{N}{r}}$, where $r=gcd(a+1, c, b, d+1)$. 
\end{lemma}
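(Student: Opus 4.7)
The plan is to recognize $G$ as the kernel of the integer $2\times 2$ matrix
\[
M := \begin{pmatrix} a+1 & c \\ b & d+1 \end{pmatrix}
\]
acting on $(\bbZ/N)^2$, and then to determine its isomorphism type by passing to Smith normal form. Concretely, the defining Equations \ref{equ:torus_rep_equation2}, read modulo $N$, say exactly that $(k,l) \in G$ iff $M\begin{pmatrix}k\\l\end{pmatrix} \equiv 0 \pmod N$, so $G = \ker(M : (\bbZ/N)^2 \to (\bbZ/N)^2)$, which is automatically a subgroup.

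Next I would record two arithmetic facts about $M$. First, a direct computation using $\det A = ad - bc = 1$ gives $\det M = (a+1)(d+1) - bc = a + d + 2 = \pm N$. Second, since $r = \gcd(a+1,c,b,d+1)$ divides every entry of $M$, the matrix $M/r$ has integer entries with determinant $\pm N/r^2$, forcing $r^2 \mid N$ and in particular $r \mid N/r$. These are precisely the two conditions needed for the Smith normal form of $M$ to equal $D := \mathrm{diag}(r,\,N/r)$: the first elementary divisor of a $2\times 2$ integer matrix is the gcd of its entries (here $r$), and the second is $|\det|$ divided by the first (here $N/r$), and the divisibility $r \mid N/r$ is exactly what makes $(r, N/r)$ a valid elementary-divisor sequence.

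Choosing $U, V \in \GL(2,\bbZ)$ with $UMV = D$, the induced maps on $(\bbZ/N)^2$ are automorphisms, so $\ker M \cong \ker D$ as abelian groups. It then suffices to observe that
\[
\ker D = \{(x,y) \in (\bbZ/N)^2 : rx \equiv 0 \pmod N,\ (N/r)y \equiv 0 \pmod N\}
\]
consists of those $x$ with $(N/r)\mid x$ and those $y$ with $r \mid y$, giving the cyclic factors $\bbZ_r$ and $\bbZ_{N/r}$ respectively. Thus $G \cong \bbZ_r \times \bbZ_{N/r}$.

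The only potentially subtle point is the verification that Smith normal form yields precisely $\mathrm{diag}(r, N/r)$, rather than some other pair $(d_1, d_2)$ with $d_1 d_2 = N$ and $d_1 \mid d_2$. This is pinned down by the two classical invariants of $M$ under $\GL(2,\bbZ)$-equivalence (gcd of entries and determinant), both of which I compute above, so the main content of the proof really is the two-line Smith-form observation together with the arithmetic identities $\det M = \pm N$ and $r^2 \mid N$. I expect no serious obstacle beyond keeping the sign in $\det M = a+d+2 = \pm N$ and the consequent $r^2 \mid N$ statement clean.
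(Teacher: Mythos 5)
Your proof is correct and rests on the same computation as the paper's: the Smith normal form of the matrix $\begin{pmatrix} a+1 & c \\ b & d+1 \end{pmatrix}$, whose determinant is $a+d+2=\pm N$ and whose entry-gcd is $r$. The only (minor) difference is packaging: you compute $G$ directly as the kernel of this matrix modulo $N$, while the paper realizes $G$ as the image of the adjugate map and hence as the cokernel $\mathbb{Z}^2/\mathrm{Im}\begin{pmatrix} a+1 & c \\ b & d+1 \end{pmatrix}$ -- the two descriptions agree because the determinant is $\pm N$ -- and your explicit verification that the elementary divisors are exactly $(r, N/r)$ (via $r^2 \mid N$) is a detail the paper leaves implicit.
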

\begin{proof}
Let  $f:\mathbb{Z}\times\mathbb{Z}\to\mathbb{Z}_N\times\mathbb{Z}_N$ be the group homomorphism given by
$$f \begin{pmatrix}\mu\\\nu\end{pmatrix} =\begin{pmatrix}d+1&-c\\-b&a+1\end{pmatrix}\begin{pmatrix}\mu\\\nu\end{pmatrix}
$$
The solution space $G$ is the image of $f$ and a subgroup of $\mathbb{Z}_N\times\mathbb{Z}_N$.

Define the chain complex $\mathbb{Z}\times\mathbb{Z}\stackrel{g}{\longrightarrow}\mathbb{Z}\times\mathbb{Z}\stackrel{f}{\longrightarrow}\mathbb{Z}_N\times\mathbb{Z}_N$ where $g=\begin{pmatrix}a+1&c\\b&d+1\end{pmatrix}$. Then $\Im(f)\cong \mathbb{Z}\times\mathbb{Z}/\ker(f)$ and $\ker(f)=\Im(g)$. By considering the Smith normal form of $g$, we obtain an isomorphism $G \cong \mathbb{Z}_r\times\mathbb{Z}_{N/r}$ where $r=gcd(a+1,c,b,d+1)$.
\qed
\end{proof}
We can use $G$ to characterize non-Abelian characters of $M$ by the following lemma.
\begin{lemma}
The irreducible characters $Y(\mu,\nu)$ of $M$ are in one-to-one correspondence with subsets $\{g,-g\}\subset G$ where $2g\neq0$.   In addition, the pairs $X^{\pm}(\mu,\nu)$ of reducible non-Abelian characters are in one-to-one correspondence with elements $g\in G$ such that $2g=0$.
\end{lemma}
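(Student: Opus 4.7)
The plan is to translate the explicit parameterization of non-Abelian characters from Section~\ref{subsec:character} into the language of the group $G$ introduced in Lemma~\ref{lem:def_G}. The proof of that lemma already identifies $G$ as the image of the homomorphism $f:\mathbb{Z}\times\mathbb{Z}\to\mathbb{Z}_N\times\mathbb{Z}_N$ adjugate to the coefficient matrix of Equation~\ref{equ:torus_rep_equation2}; since $(a+1)(d+1)-bc = a+d+2 = \pm N$, inverting this matrix (modulo $N$) relabels each pair $(\mu,\nu)$ appearing in Equation~\ref{equ:torus_irrep_form} by an element $(k,l) = f(\mu,\nu) \in G$, and distinct classes of $(\mu,\nu)$ correspond to distinct elements of $G$.

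For the irreducible half I would make two observations. First, the irreducibility condition in Equation~\ref{equ:torus_irrep_form}---that $e^{2\pi i k/N}$ and $e^{2\pi i l/N}$ are not both equal to $\pm 1$---is equivalent to $2(k,l) \neq 0$ in $\mathbb{Z}_N \times \mathbb{Z}_N$, hence in $G$. Second, conjugation of the standard representative by $h = \begin{pmatrix}0 & 1 \\ -1 & 0\end{pmatrix}$ sends $\mathrm{diag}(q, q^{-1}) \mapsto \mathrm{diag}(q^{-1}, q)$ while preserving the image of $h$, so $(k,l)$ and $(-k,-l)$ have the same character. To rule out further coincidences I would compare the traces
\[
\chi(x) = 2\cos\frac{2\pi k}{N}, \quad \chi(y) = 2\cos\frac{2\pi l}{N}, \quad \chi(xy) = 2\cos\frac{2\pi(k+l)}{N}.
\]
The first two force $(k',l') \equiv (\pm k, \pm l) \pmod N$ with independent signs, and a short case analysis on the third shows that any mixed-sign choice forces $2k = 0$ or $2l = 0$, and in either case collapses to $(k',l') = -(k,l)$. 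This yields the claimed bijection between irreducible characters and subsets $\{g,-g\}\subset G$ with $2g\neq 0$.

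For the reducible half, each candidate pair $(\epsilon_x, \epsilon_y) \in \{0,1\}^2$ produces under the identification above the element $(k,l) = (\epsilon_x N/2,\, \epsilon_y N/2)$, which is manifestly 2-torsion in $\mathbb{Z}_N \times \mathbb{Z}_N$. I would then verify that the parity list in Section~\ref{subsec:character} matches exactly the membership condition $(k,l)\in G$, i.e., $(a+1)\epsilon_x + c\epsilon_y \equiv 0 \equiv b\epsilon_x + (d+1)\epsilon_y \pmod 2$. For example, $(\epsilon_x,\epsilon_y) = (1,0)$ gives $(k,l)\in G$ iff $a$ is odd and $b$ is even; invoking $\det A = 1$ to tie down the remaining parities then shows this is realized exactly by $P = (o,o;e,o)$ and $P=(o,o;e,e)$, matching the paper's list. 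The cases $(0,1)$ and $(1,1)$ are analogous. Finally, for each admissible $(\epsilon_x,\epsilon_y)$ the two roots $v + v^{-1} = \pm\sqrt{a+d+2}$ of Equation~\ref{equ:torus_rep_equation5} produce exactly the two characters $X^{\pm}(\mu,\nu)$, establishing the bijection with $G[2]$.

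The main technical hurdle is the parity-to-membership bookkeeping in the reducible case: the four sub-cases must be cross-checked against $\det A = 1$ to confirm both completeness (every 2-torsion element of $G$ is realized) and disjointness from forbidden parity patterns. One should additionally verify the degenerate regime where $N$ is odd (equivalently $a+d$ is odd), in which $G[2]$ is trivial and only $(\epsilon_x,\epsilon_y)=(0,0)$ is permitted by the parity constraints. Beyond this finite bookkeeping I do not expect any deeper obstruction, as both halves of the lemma reduce to elementary counting once the identification $(\mu,\nu)\leftrightarrow(k,l)$ is in place.
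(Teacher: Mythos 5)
Your proposal is correct and follows essentially the same route as the paper: identify representation parameters with elements of $G$ via the adjugate of the coefficient matrix, observe that the non-Abelian/irreducibility condition is exactly $2g\neq 0$ and that conjugation by $\rho(h)$ identifies $g$ with $-g$, and match the $2$-torsion of $G$ with the admissible parity patterns for the reducible representations. You actually supply more detail than the paper in two places it leaves implicit --- the trace comparison on $x$, $y$, $xy$ ruling out mixed-sign coincidences, and the cross-check of the parity list against membership in $G$ using $\det A=1$ --- and both of those checks go through as you describe.
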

\begin{proof} Suppose that $(\mu,\nu)\in G$ corresponds to a representation $\rho$ as in Equation \ref{equ:torus_irrep_form} which is not necessarily non-Abelian.
We first show that $\rho$ is non-Abelian if and only if $2(\mu,\nu)\neq0$. According to the previous subsection, $\rho$ is non-Abelian if and only if $\rho(x),\rho(y)$ do not both take values in $\{I,-I\}$, which is equivalent to the statement that $\rho(x^2),\rho(y^2)$ are not both $I$. Since $2(\mu,\nu)$ corresponds to the representation $(x \mapsto \rho(x^2), y \mapsto \rho(y^2), h \mapsto \rho(h))$, the claim follows from the fact that the representation $(x \mapsto I, y \mapsto I, h \mapsto \rho(h))$ corresponds to $0\in G$.

Suppose that $(\mu_1,\nu_1),(\mu_2,\nu_2)\in G$ correspond to the same irreducible character.  Let $(k_1, l_1)$ and $(k_2, l_2)$ be the corresponding solutions to Equation \ref{equ:torus_rep_equation2}, and $\rho_1$ and $\rho_2$ be the corresponding representations as defined in Equation~\ref{equ:torus_irrep_form}. Then either $\rho_1(x)=\rho_2(x)$ and $\rho_1(y)=\rho_2(y)$, or $\rho_1(x)=\rho_2(x^{-1})$ and $\rho_1(y)=\rho_2(y^{-1})$, which implies that $(\mu_1,\nu_1)=\pm(\mu_2,\nu_2)$. This proves the first part of the lemma.

For the second part, let $\rho$ denote a reducible non-Abelian representation, and let $\epsilon_x, \epsilon_y \in \{0,1\}$ be the corresponding sign exponents as defined in Equation \ref{equ:torus_reducible_form}. By considering the diagonal entries of $\rho(x)$ and $\rho(y)$, such a representation $\rho$ exists if and only if the following equations are satisfied.
$$(a+1)\epsilon_x\frac{N}{2}+c\epsilon_y\frac{N}{2}=\mu N
$$
$$b\epsilon_x\frac{N}{2}+(d+1)\epsilon_y\frac{N}{2}=\nu N
$$
The solutions of above equations are in one-to-one correspondence with elements in $G$ of order 1 or 2. Fixing $(\epsilon_x,\epsilon_y)$, the corresponding characters occur in pairs $X^\pm(\mu,\nu)$.  This proves the second part of the lemma.
\qed
\end{proof}

\newcommand{\hq}{\hat{q}}

We now define a map $\hq:\mathbb{Z}\times\mathbb{Z}\to\mathbb{Z}_N$ by $\hq(\mu,\nu)=c\nu^2+(a-d)\mu\nu - b\mu^2$.

\begin{lemma}\label{lem:well-defined}
The map $\hq$ induces a  quadratic form $\tq : G \to \mathbb{Z}_N$.
\end{lemma}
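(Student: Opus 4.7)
The plan is to work with the matrix $J = \begin{pmatrix} d+1 & -c \\ -b & a+1 \end{pmatrix}$, so that $f(\mu,\nu)^T = J(\mu,\nu)^T$, and to compute $\det J = (a+1)(d+1) - bc = a+d+2$ using $ad - bc = 1$. Since $\det J = \pm N$, the integer kernel admits the clean description $\ker_{\mathbb{Z}}(f) := \{v \in \mathbb{Z}^2 : Jv \in N\mathbb{Z}^2\} = \mathrm{adj}(J)\,\mathbb{Z}^2$, generated by the two columns $(a+1, b)$ and $(c, d+1)$ of the adjugate of $J$.

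Because $\hat{q}$ is a homogeneous quadratic polynomial on $\mathbb{Z}^2$ with associated symmetric bilinear form $B(v,w) := \hat{q}(v+w) - \hat{q}(v) - \hat{q}(w)$, the standard expansion $\hat{q}(v+\Delta) = \hat{q}(v) + B(v,\Delta) + \hat{q}(\Delta)$ reduces the well-definedness of $\tilde{q}: G \to \mathbb{Z}_N$ to two modulo-$N$ conditions: (i) $\hat{q}(\Delta) \equiv 0 \pmod N$ and (ii) $B(v,\Delta) \equiv 0 \pmod N$ for every $v \in \mathbb{Z}^2$, both for $\Delta \in \ker_{\mathbb{Z}}(f)$. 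By the first paragraph, both conditions need only be verified on the two explicit generators of $\ker_{\mathbb{Z}}(f)$.

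The verifications rest on three mod-$N$ identities: $d+1 \equiv -(a+1)$ and $a-d \equiv 2(a+1)$, both coming from $N \mid a+d+2$, together with $(a+1)^2 + bc \equiv 0$, which follows by combining this with $ad - bc = 1$. Substituting $(a+1,b)$ and $(c,d+1)$ into $\hat{q}$ and into the matrix $M = \begin{pmatrix} -2b & a-d \\ a-d & 2c \end{pmatrix}$ of $B$, and applying these identities, will make every resulting expression collapse to $0 \pmod N$. Once well-definedness is in place, the quadratic-form axioms $\tilde{q}(-g) = \tilde{q}(g)$ and biadditivity of the associated symmetric form $\tilde{q}(g+h) - \tilde{q}(g) - \tilde{q}(h)$ are immediate from the corresponding features of $\hat{q}$ on $\mathbb{Z}^2$, so no further work is needed.

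The only real obstacle is bookkeeping: substituting a generic element of $\ker_{\mathbb{Z}}(f)$ before reducing produces visibly longer expressions in the parameters $s,t$ used to index the kernel. I would therefore plan to reduce $d+1$, $a-d$, and $(a+1)^2$ modulo $N$ \emph{first} and work with the two generators $(a+1,b)$ and $(c,d+1)$ directly, which turns both (i) and (ii) into one-line cancellations rather than a four-variable polynomial identity.
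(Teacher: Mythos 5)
Your proposal is correct and follows essentially the same route as the paper: both reduce well-definedness to checking that $\hq$ is unchanged by translation along the two generators $(a+1,b)$ and $(c,d+1)$ of $\ker(f)=\mathrm{Im}(g)$, and both verify this by direct computation using $ad-bc=1$ and $a+d+2\equiv 0 \pmod N$. Your splitting of $\hq(v+\Delta)-\hq(v)$ into $\hq(\Delta)+B(v,\Delta)$ and the three mod-$N$ identities are just a cleaner bookkeeping of the same expansion the paper carries out, and the resulting cancellations do go through as you claim.
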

\begin{proof}
Since $\ker(f)=\Im(g)=\left\{\begin{pmatrix}a+1&c\\b&d+1\end{pmatrix}\begin{pmatrix}i\\j\end{pmatrix}i,j\in\mathbb{Z}\right\}$, it suffices to show that $\hq(\mu+a+1, \nu+b) = \hq(\mu, \nu)$ and $\hq(\mu+c, \nu+d+1) = \hq(\mu, \nu)$ for general $\mu$ and $\nu$.  We have
\begin{align*}
    \hq(\mu+a+1, \nu+b) - \hq(\mu, \nu)  &= c(\nu + b)^2 + (a - d) (\mu + a + 1)(\nu + b)\\
    & \qquad -b (\mu + a + 1)^2  - \hq(\mu,\nu) \\
    & =  -b(d - a + 2 a + 2)\mu  + (2bc + (a-d)(a+1))\nu \\
    &  \qquad - b( -bc + (d-a)(a+1) + (a+1)^2) \\
     & = (-2 + 2ad - ad + a^2 + a - d)\nu  \\
     & \qquad - b( 1 - ad + ad - a^2 + d - a + a^2 + 2a + 1) \\
     & = (-2 + a(-a -2) + a^2 +a +2 +a)\nu  \\
    & =  0,
\end{align*}
and
\begin{align*}
\hq(\mu + c, \nu+d+1)-\hq(\mu, \nu)& =c(\nu+(d+1))^{2}+(a-d)(\mu+c)(\nu+d+1)\\
& \qquad -b(\mu+c)^{2}-\hq(\mu,\nu)\\
& =c(2 (d+1)+(a-d) ) \nu +((a-d)(d+1)-2 b c) \mu\\
& \qquad +c((d+1)^{2}+(a-d)(d+1)-b c)\\
& =(-d^{2}-d+a-a d+2)\mu +  c(d+1+a d+a-b c)\\
& = 0
\end{align*}
Thus $\hq$ induces a well defined map $\tq: G \to \mathbb{Z}_N$.  It is routine to check that this map is a quadratic form.
\qed
\end{proof}

\subsection{$S$- and $T$- matrices from torus bundles}

We define the loop operators for non-Abelian characters by
$$X^{\pm}(\mu,\nu)\mapsto (x^my^n,\Sym^0)
$$
$$Y(\mu,\nu)\mapsto(x^my^n,\Sym^1)
$$
where $m=-b\mu+(a-1)\nu$, $n=(-d+1)\mu+c\nu$, and $\Sym^j$ denotes the unique $(j+1)$-dimensional irreducible representation of $\SL(2, \mathbb{C})$. We choose $X^+(0,0)$ to correspond to the monoidal unit object. Each character can be represented by infinitely many representatives $(\mu,\nu)\in\mathbb{Z}\times\mathbb{Z}$, but as the following lemma shows, the $S$-matrix is independent of this choice.

\begin{lemma}\label{lem:smatrices}
Let $S^l$ be the $S$-matrix constructed from loop operators as above, then
$$S^l_{X^{\pm}(\mu_1,\nu_1),X^{\pm}(\mu_2,\nu_2)}=1
$$
$$S^l_{X^{\pm}(\mu_1,\nu_1),Y(\mu_2,\nu_2)}=2
$$
$$S^l_{Y(\mu_1,\nu_1),Y(\mu_2,\nu_2)}=4\cos\left(\frac{2\pi}{N}\lambda\left(\mu_1,\nu_1,\mu_2,\nu_2\right)\right)
$$
where $\lambda(\mu_1,\nu_1,\mu_2,\nu_2) = \tq(\mu_1 + \mu_2, \nu_1 + \nu_2) - \tq(\mu_1, \nu_1) - \tq(\mu_2, \nu_2)$ is the bilinear form associated to the quadratic form $\tq : G \to \mathbb{Z}_N$ defined in Lemma~\ref{lem:well-defined}. 
\end{lemma}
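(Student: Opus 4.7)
The strategy is to unpack the formula $S^l_{\alpha\beta} = W_\beta(\alpha)\, W_0(\beta)$ from Equation \ref{equ:SW_relation} and evaluate both factors in each case, using the explicit representations of Section \ref{subsec:character} with the natural sign choice $\epsilon = +1$.

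Two universal facts dispatch the first two identities immediately. Since $\Sym^0$ is the one-dimensional trivial representation of $\SL(2,\bbC)$, $\Tr_{\Sym^0}(\rho(g)) = 1$ for every $\rho$ and every $g$. Hence $W_\beta(X^\pm(\mu_1,\nu_1)) = 1$ for every $\beta$, and in particular $W_0(X^\pm(\mu_2,\nu_2)) = 1$, which yields $S^l_{X^\pm,X^\pm} = 1$. For the unit $\chi_0 = X^+(0,0)$, the representation $\rho_0$ from Equation \ref{equ:torus_reducible_form} (with $\epsilon_x = \epsilon_y = 0$) sends $x, y$ to commuting upper-triangular unipotent matrices, so $\rho_0(x^{m_2}y^{n_2})$ is itself upper-triangular unipotent and has $\Tr_{\Sym^1} = 2$. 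Therefore $W_0(Y(\mu_2,\nu_2)) = 2$ and $S^l_{X^\pm,Y} = 1 \cdot 2 = 2$.

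For the third identity, the diagonal representation $\rho_{Y(\mu_2,\nu_2)}$ from Equation \ref{equ:torus_irrep_form} satisfies
\[
\rho_{Y(\mu_2,\nu_2)}(x^{m_1}y^{n_1}) = \mathrm{diag}\bigl(\zeta^{m_1 k_2 + n_1 l_2},\ \zeta^{-(m_1 k_2 + n_1 l_2)}\bigr),
\]
where $\zeta = e^{2\pi i/N}$ and $(k_2, l_2) = ((d+1)\mu_2 - c\nu_2,\ -b\mu_2 + (a+1)\nu_2)$ by Lemma \ref{lem:def_G}. Its $\Sym^1$-trace is $2\cos\bigl(\tfrac{2\pi}{N}(m_1 k_2 + n_1 l_2)\bigr)$, which combined with $W_0(Y) = 2$ gives the claimed $4\cos\bigl(\tfrac{2\pi}{N}(m_1 k_2 + n_1 l_2)\bigr)$. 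It then remains to prove the integer identity $m_1 k_2 + n_1 l_2 = \lambda(\mu_1,\nu_1,\mu_2,\nu_2)$.

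Both sides are bilinear in $(\mu_i, \nu_i)$, and I would verify the identity by collecting coefficients. Expanding $\lambda$ from $\tq(\mu,\nu) = c\nu^2 + (a-d)\mu\nu - b\mu^2$ gives $\lambda = -2b\mu_1\mu_2 + (a-d)(\mu_1\nu_2 + \nu_1\mu_2) + 2c\nu_1\nu_2$. Expanding $m_1 k_2 + n_1 l_2$ directly and then applying the determinant relation $bc = ad - 1$ collapses each of the four bilinear coefficients to match. Independence from the integer lifts of $(\mu_i, \nu_i)$ follows automatically, since the identity expresses the bilinear form $\lambda$, which Lemma \ref{lem:well-defined} already shows descends to a well-defined map on $G \times G$ modulo $N$. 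The main obstacle is purely organizational, namely systematic use of the two identities $ad - bc = 1$ and $N \mid a+d+2$; there is no conceptual difficulty.
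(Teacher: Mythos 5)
Your proposal is correct and follows essentially the same route as the paper: both reduce each entry to $S^l_{\alpha\beta}=W_\beta(\alpha)W_0(\beta)$, observe that $\Sym^0$-weights are trivially $1$ and that the unipotent unit representation gives $W_0(Y)=2$, and then verify the bilinear identity $m_1k_2+n_1l_2=\lambda(\mu_1,\nu_1,\mu_2,\nu_2)$ using $ad-bc=1$ (the paper organizes this as the matrix product $\begin{pmatrix}-b&-d+1\\a-1&c\end{pmatrix}\begin{pmatrix}d+1&-c\\-b&a+1\end{pmatrix}=\begin{pmatrix}-2b&a-d\\a-d&2c\end{pmatrix}$, which is the same coefficient check you describe). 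No gaps.
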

\begin{proof}
From Equation \ref{equ:Wbetaalpha}, we have the following $W$-symbols
$$W_{X^{\pm}(\mu_1,\nu_1)}(X^{\pm}(\mu_2,\nu_2))=W_{Y(\mu_1,\nu_1)}(X^{\pm}(\mu_2,\nu_2))=1
$$
$$W_{X^{\pm}(\mu_1,\nu_1)}(Y(\mu_2,\nu_2))=\Tr(X^{\pm}(\mu_1,\nu_1)(x^{m_2}y^{n_2}))
$$
$$W_{Y(\mu_1,\nu_1)}(Y(\mu_2,\nu_2))=\Tr(Y(\mu_1,\nu_1)(x^{m_2}y^{n_2}))
$$
Thus,
$$S^l_{X^{\pm}(\mu_1,\nu_1),X^{\pm}(\mu_2,\nu_2)}=W_{X^{\pm}(\mu_2,\nu_2)}(X^{\pm}(\mu_1,\nu_1))W_{X^{+}(0,0)}(X^{\pm}(\mu_2,\nu_2))=1
$$
$$S^l_{X^{\pm}(\mu_1,\nu_1),Y(\mu_2,\nu_2)}=W_{Y(\mu_2,\nu_2)}(X^{\pm}(\mu_1,\nu_1))W_{X^{+}(0,0)}(Y(\mu_2,\nu_2))=2
$$
$$S^l_{Y(\mu_1,\nu_1),Y(\mu_2,\nu_2)}=W_{Y(\mu_2,\nu_2)}(Y(\mu_1,\nu_1))W_{X^{+}(0,0)}(Y(\mu_2,\nu_2))=2\Tr(Y(\mu_2,\nu_2)(x^{m_1}y^{n_1}))
$$
and
\begin{align*}
\Tr(Y(\mu_2,\nu_2)(x^{m_1}y^{n_1}))&=2\cos\left(2\pi\frac{k_2m_1+l_2n_1}{N}\right)
\\
&=2\cos\left(\frac{2\pi}{N}\begin{pmatrix}m_1&n_1\end{pmatrix}\begin{pmatrix}k_2\\l_2\end{pmatrix}\right)
\\
&=2\cos\left(\frac{2\pi}{N}\begin{pmatrix}\mu_1&\nu_1\end{pmatrix}\begin{pmatrix}-b&-d+1\\a-1&c\end{pmatrix}\begin{pmatrix}d+1&-c\\-b&a+1\end{pmatrix}\begin{pmatrix}\mu_2\\\nu_2\end{pmatrix}\right)
\\
&=2\cos\left(\frac{2\pi}{N}\begin{pmatrix}\mu_1&\nu_1\end{pmatrix}\begin{pmatrix}-2b&a-d\\a-d&2c\end{pmatrix}\begin{pmatrix}\mu_2\\\nu_2\end{pmatrix}\right)
\\
&=2\cos\left(\frac{2\pi}{N}\lambda(\mu_1,\nu_1,\mu_2,\nu_2)\right).
\end{align*}
\qed
\end{proof}

Defining $q : G\to U(1)$  by $q(x)= e^{\frac{2\pi i \tq(x)}{N}}$, we have the premodular category $\mathcal{C}(G,q)$ and its $\mathbb{Z}_2$-equivariantization  $\mathcal{C}(G,q)^{\mathbb{Z}_2}$ as described in Section~\ref{sec:equiv}.  Our main theorem is the following.

\begin{theorem}\label{thm:main}
The $S$- and $T$-matrices constructed from torus bundles with Sol geometry coincide with those of the $\mathbb{Z}_2$-equivariantization  $\mathcal{C}(G,q)^{\mbbZ_2}$.
\end{theorem}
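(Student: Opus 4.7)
The plan is to exhibit an explicit correspondence between the simple objects of $\mathcal{C}(G,q)^{\mathbb{Z}_2}$ and the non-Abelian characters of $M_A$, and then verify that the $T$- and $S$-matrix data computed in Section~\ref{sec:equiv} for the equivariantization match the data computed in Lemma~\ref{lem:smatrices} for the torus bundle. The matching of simple objects is furnished by the lemmas of Section~\ref{subsec:character}: the irreducible characters $Y(\mu,\nu)$ correspond to the two-dimensional simple objects $Y_a$ of $\mathcal{C}(G,q)^{\mathbb{Z}_2}$ via the unordered pairs $\{g,-g\} \subset G$ with $2g \neq 0$, and the reducible non-Abelian characters $X^\pm(\mu,\nu)$ correspond to the invertible objects $X^\pm_b$ via elements $b \in G$ with $2b = 0$, with $X^+(0,0)$ identified with the monoidal unit.

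For the $T$-matrix I would combine $\theta_\chi = e^{-2\pi i \CS(\chi)}$ from (\ref{equ:CS_is_twist}) with the Chern--Simons formula (\ref{eq:CS}). In the irreducible case, expanding $N\cdot\CS = k\nu - l\mu$ via the definition $(k,l) = f(\mu,\nu)$ of Lemma~\ref{lem:def_G} yields $k\nu - l\mu = -\hq(\mu,\nu)$ after a short algebraic manipulation, so $\theta = e^{2\pi i \hq/N} = q(g)$. A parallel computation handles the reducible non-Abelian characters, using the second case of (\ref{eq:CS}) with $(k,l) = (\epsilon_x N/2, \epsilon_y N/2)$.

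For the $S$-matrix the $Y \times Y$ entries match immediately, since both descriptions give $4\cos(2\pi\lambda/N)$. The mixed entries $S_{X^\pm, X^\pm}$ and $S_{X^\pm, Y}$ appear in Lemma~\ref{lem:smatrices} as $1$ and $2$ but in Section~\ref{subsec:SandT} as $e^{2\pi i\lambda/N}$ and $2e^{2\pi i\lambda/N}$. Matching thus reduces to the isotropy statement $\lambda(t, g) \equiv 0 \pmod{N}$ whenever $t \in G$ satisfies $2t = 0$ and $g \in G$ is arbitrary. The identity $2\lambda(t,g) = \lambda(2t, g) \equiv 0 \pmod{N}$ already forces $\lambda(t,g) \in \{0, N/2\} \pmod{N}$, so only the value $N/2$ needs to be ruled out.

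The main obstacle I anticipate is establishing this isotropy condition. My plan is a case analysis on the three possible non-trivial order-2 elements $t = (N/2, 0)$, $(0, N/2)$, $(N/2, N/2)$ of $G$. Membership of $t$ in $G$ forces parity conditions on the entries of the monodromy matrix $A = \begin{pmatrix}a & b \\ c & d\end{pmatrix}$; for instance $(N/2, 0) \in G$ requires both $a+1$ and the matrix entry $b$ of $A$ to be even. Substituting into the explicit bilinear form $\lambda(x,y) = -2b\, x_1 y_1 + (a-d)(x_1 y_2 + x_2 y_1) + 2c\, x_2 y_2$, these parity conditions reduce $\lambda(t, g) \pmod{N}$ to an expression of the form $(N/2)(a-d)(\epsilon_x l + \epsilon_y k)$, and the allowed parities of $(\epsilon_x, \epsilon_y)$ tabulated in Section~\ref{subsec:character}, combined with the constraints on $(k,l)$ from $g \in G$, should force this quantity to be even. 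I expect this parity bookkeeping, case by case, to be the most tedious but tractable step.
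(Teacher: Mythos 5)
Your proposal is correct and follows essentially the same route as the paper: identify simple objects with characters via the lemmas of Section~\ref{subsec:character}, match the $T$-matrix through the identity $k\nu - l\mu = -\hq(\mu,\nu)$, and reduce the remaining $S$-matrix entries to a parity-based case analysis over the order-two elements of $G$ using the constraints on $(\epsilon_x,\epsilon_y)$. Your packaging of the mixed entries as a single isotropy statement $\lambda(t,g)\equiv 0 \pmod N$ for $2t=0$ (first pinning $\lambda(t,g)\in\{0,N/2\}$, then ruling out $N/2$) is a slightly more uniform phrasing of the entry-by-entry computations the paper carries out, but the underlying verification is the same.
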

\begin{proof}
From Equations \ref{equ:torus_rep_equation2} and \ref{eq:CS}, we have $\CS(\rho) =\frac{-c\nu+(d-a)\mu\nu+b\mu^2}{N} = -\frac{\tq(\mu,\nu)}{N}$.   
Thus, the $T$-matrix of $\mathcal{C}(G,q)^{\mbbZ_2}$ as defined in Section~\ref{subsec:SandT} coincides with the one constructed directly from the torus bundle as defined in Equation \ref{equ:CS_is_twist}.

Let $S^e$ denote the $S$-matrix from the $\mathbb{Z}_2$-equivariantization $\mcC(G,q)^{\mbbZ_2}$ as defined in Section~\ref{subsec:SandT}, and let $S^l$ denote the $S$-matrix from the local operator construction as defined in Lemma~\ref{lem:smatrices}.  We first consider the following entry: $$S^e_{X^{\pm}(\mu_1,\nu_1),X^{\pm}(\mu_2,\nu_2)}=\frac{q(X(\mu_1+\mu_2,\nu_1+\nu_2))}{q(X(\mu_1,\nu_1))q(X(\mu_2,\nu_2))}
$$

When $X(\mu_1,\nu_1)=X(\mu_2,\nu_2)$, according to the group structure of $G$ we have  $X(\mu_1+\mu_2,\nu_1+\nu_2)=X(0,0)$. Thus $S^e_{X^{\pm}(\mu_1,\nu_1),X^{\pm}(\mu_2,\nu_2)}=1$.  Similarly, if  $X(\mu_i, \nu_i) = X(0,0)$ for either $i$, then clearly $S^e_{X^{\pm}(\mu_1,\nu_1),X^{\pm}(\mu_2,\nu_2)}=1$.

When $X(\mu_1,\nu_1)\neq X(\mu_2,\nu_2)$ and  $(\mu_i, \nu_i) \neq (0,0)$ for all $i$, then the characters $X(\mu_1+\mu_2,\nu_1+\nu_2)$, $X(\mu_1,\nu_1)$, and $X(\mu_2,\nu_2)$ are all distinct.  Using the notation of Section~\ref{subsec:character}, these characters must correspond to the cases $(\epsilon_x, \epsilon_y) \in \{(1,0), (0,1), (1,1)\}$.  As mentioned in that section, this can only occur if the parities of $(a,d;b,c)$ are $(o,o;e,e)$.  Using the fact that $ad - bc = 1$, one obtains that $N = a + d + 2 = 0 \pmod{4}$.  Thus Equation~\ref{eq:CS} reduces to $\CS(X(\mu, \nu)) = (b \epsilon_x + c \epsilon_y)/4$.  By inspection,  one finds that applying $q(\mu, \nu) = \exp(-2\pi i \CS(X(\mu, \nu)))$ to the $(\mu,\nu)$ corresponding to $(\epsilon_x, \epsilon_y) \in \{(1,0), (0,1), (1,1)\}$ yields either the multiset $-1,-1,1$ or $1,1,1$. Thus $S^e_{X^{\pm}(\mu_1,\nu_1),X^{\pm}(\mu_2,\nu_2)}=1$.

Next we consider 
$$S^e_{X^{\pm}(\mu_1,\nu_1),Y(\mu_2,\nu_2)}=2\frac{q(Y(\mu_1+\mu_2,\nu_1+\nu_2))}{q(X(\mu_1,\nu_1)) q(Y(\mu_2,\nu_2))}.
$$

Without loss of generality, we only need to consider two cases: $(\mu_1, \nu_1)$ corresponding to $(k_1=\frac{N}{2},l_1=0)$ where the parity of $(a,d;b,c)$ is $(o,o;e,o)$, and $(\mu_1, \nu_1)$ corresponding to $(k_1=\frac{N}{2},l_1=\frac{N}{2})$ for $(o,o;e,e)$ and $(e,e;o,o)$.

When $k_1=\frac{N}{2}$ and $l_1=0$,
\begin{align*}
S^e_{X^{\pm}(\mu_1,\nu_1),Y(\mu_2,\nu_2)}&=2\exp\left(2\pi i\frac{(k_2+\frac{N}{2})(\nu_2+\frac{b}{2})-l_2(\mu_2+\frac{a+1}{2})-k_2\nu_2+l_2\mu_2-\frac{Nb}{4}}{N}\right)
\\
&=2\exp\left(2\pi i\frac{N\nu_2+k_2b-l_2(a+1)}{2N}\right)
\\
&=2\exp\left(2\pi i\frac{N\nu_2+\nu_2N-l_2(d+1)-l_2(a+1)}{2N}\right)
\\
&=2\exp\left(2\pi i\frac{-l_2(a+d+2)}{2N}\right)
\\
&=2\exp\left(2\pi i\frac{-l_2}{2}\right)
\end{align*}
Since $l_2=-b\mu_2+(a+1)\nu_2$ and $b,a+1$ are both even, $l_2$ is even. Thus $S^e_{X^{\pm}(\mu_1,\nu_1),Y(\mu_2,\nu_2)}=2$.

When $k_1=\frac{N}{2}$ and $l_1=\frac{N}{2}$,

\begin{align*}
S^e_{X^{\pm}(\mu_1,\nu_1),Y(\mu_2,\nu_2)}&=2\exp\left(\frac{2\pi i}{N}((k_2+\frac{N}{2})(\nu_2+\frac{b+d+1}{2}) -(l_2+\frac{N}{2})(\mu_2+\frac{a+c+1}{2}) \right.\\
& \left.\qquad -k_2\nu_2+l_2\mu_2-\frac{N(a+c+b+d+2)}{4} )\right)
\\
&=2\exp\left(\frac{\pi i}{N}(N(\nu_2-\mu_2)+k_2(b+d+1)-l_2(a+c+1))\right)
\\
&=2\exp\left(\frac{\pi i}{N}(N(\nu_2-\mu_2)+N\nu_2-(d+1)l_2\right.\\
&\qquad  +k_2(d+1)-N\mu_2+k_2(a+1)-l_2(a+1))\Big)
\\
&=2\exp\left(\frac{\pi i}{N}(k_2-l_2)(a+d+2)\right)
\\
&=2\exp\left(\pi i(k_2-l_2)\right)
\end{align*}

Since $k_2-l_2=(b+d+1)\mu_2-(a+c+1)\nu_2$ and $b+d+1$, $a+c+1$ are both even, $k_2-l_2$ is even. Thus $S^e_{X^{\pm}(\mu_1,\nu_1),Y(\mu_2,\nu_2)}=2$.

Lastly, it follows from their definitions in Lemma \ref{lem:smatrices} and Section \ref{subsec:SandT} that $S^e_{Y(\mu_1,\nu_1),Y(\mu_2,\nu_2)}=S^l_{Y(\mu_1,\nu_1),Y(\mu_2,\nu_2)}$.

\end{proof}

\vspace{0.2cm}
\bibliographystyle{abbrv}
\bibliography{ref}

\end{document}